\newtheorem{theorem}{Theorem}
\newtheorem{open}{Open Problem}
\author{\small Paul F. X. Müller and Katharina Riegler}
\date{\small \today}
\title{\large Radial Variation of Bloch functions on the unit ball of $\mathbb{R}^d$}
\begin{document}

\maketitle

\begin{abstract}
In \cite{Jo} Anderson's conjecture was proven by comparing values of Bloch functions with the variation of the function. We extend that result on Bloch functions from two to arbitrary dimension and prove that
\begin{equation*}
\int\limits_{[0,x]} |\nabla b(\zeta)|e^{b(\zeta)}d|\zeta| < \infty.
\end{equation*}
In the second part of the paper, we show that the area integral 
\begin{equation*}
\int\limits_{B^d} |\nabla u(w)|p(w,\theta)dA(w)
\end{equation*}
for positive harmonic functions $u$ is bounded by the value $cu(0)$ for at least one $\theta$. The integral is also transferred to simply connected domains and interpreted from the point of view of stochastics. Several emerging open problems are presented. 
\paragraph*{AMS Subject Classification 2010:} 31B25, 30H30, 31A20
\paragraph*{Keywords: } Radial Variation, Bloch Functions
\end{abstract}

\section{Introduction}
In \cite{Jo} it was proven that for every Bloch function $b$ on the unit disc there is a point $x$ on the unit circle such that 
\begin{equation*}
\int\limits_{[0,x]} |\nabla b(\zeta)|e^{b(\zeta)}d|\zeta| < \infty.
\end{equation*}
This fact was used in \cite{Jo} to show Anderson's conjecture for conformal maps on the unit disc. The proof used the result of  Bourgain (\cite{Bourgain}) that for every positive harmonic function there is a direction of bounded radial variation, Pommerenke's theorem on the existence of a dense set of rays along which a Bloch function on the unit disc remains bounded (see (\cite[Proposition 4.6.]{Pom}) and conformal mappings between starlike Lipschitz domains and the unit disc. The result of Bourgain (\cite{Bourgain}) was extended to half-spaces by Michael O'Neill in \cite{Neill} and even to higher dimensional Lipschitz domains by Mozolyako and Havin in \cite{MozHav}. The extension to Lipschitz domains in higher dimensions by Mozolyako and Havin was the starting point of our work in this paper. The main result of the present paper is the following \thref{Bloch}, extending the result of \cite{Jo} to the unit ball of $\mathbb{R}^d$. We refer to \cite{Nicolau} and \cite{Neill} where this problem was posed in writing. 
\begin{theorem}
\thlabel{Bloch}
Let $b$ be a Bloch function on the unit ball $B$ of $\mathbb{R}^d$. Then there is a point $x$ on the unit sphere such that
\begin{equation*}
\int\limits_{[0,x]} |\nabla b(\zeta)|e^{b(\zeta)}d|\zeta| < \infty.
\end{equation*}
\end{theorem}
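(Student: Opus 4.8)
The plan is to transport the one–dimensional estimate from the unit disc to $B$ by replacing the use of conformal maps in \cite{Jo} (unavailable when $d>2$) with a direct construction of a starlike Lipschitz domain, on which the extension of Bourgain's theorem due to Mozolyako and Havin \cite{MozHav} can be invoked.

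First I would rewrite the integrand as a genuine gradient. Since $b$ is real valued and harmonic, $\nabla(e^{b}) = e^{b}\,\nabla b$, so that
\begin{equation*}
\int_{[0,x]}|\nabla b(\zeta)|\,e^{b(\zeta)}\,d|\zeta| = \int_{[0,x]}\bigl|\nabla\bigl(e^{b}\bigr)(\zeta)\bigr|\,d|\zeta| ,
\end{equation*}
the integral of the full gradient of the subharmonic function $v:=e^{b}$ along the ray to $x$. Two facts about $v$ organize the argument: its Riesz measure is $\Delta v = e^{b}\,|\nabla b|^{2}$, and the Bloch bound $(1-|\zeta|)\,|\nabla b(\zeta)|\le C_{b}$ yields, after integrating along radii, the growth estimate $v(\zeta)=e^{b(\zeta)}\le C(1-|\zeta|)^{-\alpha}$ with $\alpha=\alpha(C_{b})$. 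In particular $v$ need not possess a harmonic majorant on all of $B$, which is exactly why one cannot expect the conclusion for almost every $x$ and must instead isolate a single good direction.

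To isolate it I would first invoke the higher dimensional analogue of Pommerenke's Proposition~4.6 \cite{Pom}: $b$ remains bounded along a dense family of rays. Fixing such a ray, $v=e^{b}$ is bounded in a Stolz-type neighbourhood of it, and I would build from this neighbourhood a starlike Lipschitz subdomain $\Omega\subset B$ — with Lipschitz character controlled only by $C_{b}$ — on which $v$ is bounded and therefore admits a positive harmonic majorant $h$ with mass comparable to $v(0)$. On $\Omega$ the Riesz decomposition $v=h-G_{\Omega}\mu$ is available, where $\mu=c_{d}\,e^{b}|\nabla b|^{2}$ and $G_{\Omega}\mu\ge 0$ is the Green potential. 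Splitting $|\nabla v|\le |\nabla h|+|\nabla (G_{\Omega}\mu)|$, the harmonic part is handled by the Mozolyako–Havin extension \cite{MozHav} of Bourgain's theorem \cite{Bourgain} on the Lipschitz domain $\Omega$, which furnishes a radius along which $\int|\nabla h|\,d|\zeta|\le C\,h(0)$; the potential part is controlled by the area-integral estimate of the second part of this paper, $\int_{\Omega}|\nabla u|\,p(w,\theta)\,dA(w)\le c\,u(0)$, applied so that $\int_{\Omega}\bigl(\int_{\text{ray}}|\nabla_{\zeta}G_{\Omega}(\zeta,w)|\,d|\zeta|\bigr)d\mu(w)$ is finite for a common direction. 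Intersecting the two generic families of admissible directions produces one ray inside $\Omega$ with $\int|\nabla v|\,d|\zeta|<\infty$, and since $\Omega$ is starlike Lipschitz about the good direction this ray corresponds to a genuine radius $[0,x]$ of $B$, giving the point $x$ of \thref{Bloch}.

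I expect the construction and control of $\Omega$ to be the main obstacle. Because $v=e^{b}$ fails to have a global harmonic majorant, everything hinges on passing from the merely qualitative boundedness supplied by the Pommerenke-type statement to a quantitative, uniformly Lipschitz subdomain on which the positive-harmonic machinery of \cite{Bourgain} and of \cite{MozHav} becomes available; here the exponential weight $e^{b}$ must be reabsorbed as the radial stretching factor that defines $\Omega$. A secondary delicate point is the matching of scales: the line integrand is homogeneous of degree one in $\nabla b$, whereas the Riesz mass $\mu$ carries $|\nabla b|^{2}$, so the area-integral estimate must be calibrated to convert this degree-two density back into a finite degree-one radial variation.
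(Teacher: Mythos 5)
Your route is genuinely different from the paper's, but it has two structural gaps. The first concerns the domain $\Omega$. For $v=e^{b}$ to be bounded (hence to admit a harmonic majorant and a Riesz decomposition) you must take $\Omega$ to be a Stolz-type neighbourhood of a single Pommerenke ray $[0,\xi]$; but such a domain meets the unit sphere only at the apex $\xi$. The Mozolyako--Havin theorem, applied once on $\Omega$, returns a point $p_{0}$ in a positive-harmonic-measure subset of $\partial\Omega$ and a \emph{short normal segment} $[p_{0},p_{0}+r(p_{0})N(p_{0})]$ of bounded variation -- it does not let you prescribe $p_{0}=\xi$, and for every other $p_{0}$ the segment lies strictly inside $B$ and is not a radius. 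So a single application of \thref{Moz} on one subdomain cannot produce the full ray $[0,x]$. The paper's proof exists precisely to overcome this: it applies \thref{Moz} infinitely often, on a nested sequence of Lipschitz domains $W(x_{n})$ rescaled to unit size, and chains the resulting bounded-hyperbolic-length segments into a radius. Crucially, the exponential weight is not controlled by making $e^{b}$ bounded at all; instead the construction forces $b(x_{n})-b(0)\le -nM/2$ while each link contributes variation at most $c_{1}M$, so the series $\sum_{n}e^{-nM/2}c_{1}M$ converges. Your proposal has no mechanism playing the role of this geometric decay.

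The second gap is the treatment of the potential term. \thref{integral} bounds $\int_{B^{d}}|\nabla u|\,p(\cdot,\theta)\,dA$ for a \emph{positive harmonic} $u$; it says nothing about the radial variation of a Green potential $G_{\Omega}\mu$ with $\mu=c_{d}e^{b}|\nabla b|^{2}$, and the quantity $\int_{\Omega}\bigl(\int_{\mathrm{ray}}|\nabla_{\zeta}G_{\Omega}(\zeta,w)|\,d|\zeta|\bigr)d\mu(w)$ is not an instance of it. Even granting separate bounds for the harmonic and potential parts, ``intersecting the two generic families of admissible directions'' is unjustified: \thref{Moz} guarantees one good point in every surface ball of harmonic measure at least $c$ (or a Hausdorff-dimension statement), not a set of full measure, and two such families need not intersect. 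Finally, note that the paper itself flags (Remark 2 after \thref{Bloch}) that Pommerenke's rays cannot be used to separate domains when $d>2$; your construction leans on exactly the step that forced the authors to replace Pommerenke's theorem by the case-3 reflection in their cone construction.
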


\paragraph*{Remarks} 
The proof of \thref{Bloch} will show as well that there is an $x$ such that for $y\in [0,x]$ and $1-|y|$ small enough we have 
\begin{equation*}
b(y)\leq b(0)-c\int\limits_{[0,y]}|\nabla b(\zeta)|d|\zeta|.
\end{equation*}
As $-b$ is also a Bloch function we get a point $\tilde{x}$ on the unit sphere such that for $y\in [0,\tilde{x}]$ and $1-|y|$ small enough we have
\begin{equation*}
b(y)\geq b(0)+c\int\limits_{[0,y]}|\nabla b(\zeta)|d|\zeta|.
\end{equation*}
This can also be written as 
\begin{equation*}
\liminf\limits_{y\rightarrow 1} \frac{b(y)-b(0)}{\int\limits_{[0,y]}|\nabla b(\zeta)|d|\zeta|}>0.
\end{equation*}

Comparing our proof of \thref{Bloch} with the one in \cite{Jo} we observe the following.
\begin{enumerate}
\item The use of conformal maps onto Lipschitz domains is replaced by the result of Mozolyako and Havin. 
\item In \cite{Jo} the straight line segments obtained by Pommerenke's theorem are used to suitably cut planar domains. While Pommerenke's result on the radial behaviour of Bloch functions is still valid in $\mathbb{R}^d$ for $d > 2$ (see Nicolau \cite{Nicolau}) the corresponding line segments cannot be used to seperate domains in $\mathbb{R}^d$ for $d>2$ (Note that 1=d-1 iff d=2).
\item Case 3 in our construction of points within a cone (see Figure \ref{third case}) provides a substitute for the use of Pommerenke's result in \cite{Jo}.
\end{enumerate}

Analysing the proof by Mozolyako and Havin, we are able to show the following theorem, which was conjectured by Peter W. Jones in 2003 during conversations with the first named author.  
\begin{theorem}
\thlabel{integral}
Let $u$ be a positive harmonic function on the unit ball of $\mathbb{R}^d$ and $p$ the Poisson kernel. Then there is a point $\theta$ on the unit sphere $S$ such that
\begin{equation*}
\int\limits_{B^d} |\nabla u(w)|p(w,\theta)dA(w)<c u(0)
\end{equation*}
where $c=c(B^d)$ is a constant only depending on the dimension. 
\end{theorem}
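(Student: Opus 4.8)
The plan is to find the good direction $\theta$ by adapting the stopping-time construction of Mozolyako and Havin \cite{MozHav} rather than by naive averaging, which provably fails here. Indeed, writing the Herglotz representation $u(w)=\int_S p(w,\eta)\,d\nu(\eta)$ with $\nu\geq 0$ and $\nu(S)=u(0)$, and using the standard gradient bound $|\nabla u(w)|\leq C(1-|w|)^{-1}u(w)$ for positive harmonic functions, one is led to estimate
\[
\int_S\Big(\int_{B^d}|\nabla u(w)|\,p(w,\theta)\,dA(w)\Big)\,d\sigma(\theta)\;\le\;C\int_S\int_S K(\eta,\theta)\,d\nu(\eta)\,d\sigma(\theta),\qquad K(\eta,\theta)=\int_{B^d}\frac{p(w,\eta)\,p(w,\theta)}{1-|w|}\,dA(w).
\]
The kernel $K$ has a nonintegrable singularity on the diagonal $\eta=\theta$, so the average over $\theta$ diverges; the theorem can therefore hold only for a carefully chosen $\theta$ that stays away from the concentration of $\nu$.

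First I would set up a Whitney/dyadic decomposition of $B^d$ indexed by a tree $\mathcal{T}$ of boundary cubes $Q$, each with Whitney point $w_Q$ (so $1-|w_Q|\approx \ell(Q)$) and local value $u_Q=u(w_Q)$. On the tent $T(Q)$ the Poisson kernel is essentially constant, $p(w,\theta)\approx \ell(Q)\,\mathrm{dist}(Q,\theta)^{-d}$, and the nontangential geometry gives $p(w,\theta)\approx (1-|w|)^{1-d}$ precisely when $w$ lies in the Stolz cone $\Gamma(\theta)$. The area integral then splits into a cone part and a tail: over $\Gamma(\theta)$ it is comparable to the nontangential variation $\int_{\Gamma(\theta)}|\nabla u|(1-|w|)^{1-d}\,dA$, which the Mozolyako--Havin selection bounds by $c\,u(0)$; over the complement the Poisson weights on boxes of scale $\ell(Q)=2^{-n}$ and distance $\mathrm{dist}(Q,\theta)\approx 2^{-k}$ decay geometrically, contributing a factor $2^{-(n-k)d}$ to $\int_{T(Q)}|\nabla u|\,p(\cdot,\theta)\,dA\approx 2^{-(n-k)d}u_Q$.

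The heart of the argument is then to run the Mozolyako--Havin stopping time so that the selected branch $\theta$ controls the tail sum $\sum_Q 2^{-(n-k)d}u_Q$, and not merely the radial sum along the branch. Here I would use the supermartingale structure of $u_Q$ along $\mathcal{T}$ (the mean-value inequality of the positive harmonic function over the children of $Q$) together with the scalewise mass identity $\sum_{Q:\ell(Q)=2^{-n}}|Q|\,u_Q\approx \nu(S)=u(0)$, valid since $|Q|u_Q\approx \nu(\hat Q)$. The residual decay factor $2^{-(n-k)}$ left after the cube count at each distance scale is absorbed by the telescoping of the $u_Q$ along good branches, so that the total is $\le c\,u(0)$.

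The main obstacle is exactly the tail, i.e.\ the boxes $Q$ that are close to $\theta$ in angle but much smaller in scale: there the pointwise gradient bound is too lossy (this is the source of the diagonal blow-up of $K$ above), and one must instead exploit cancellation in the variation of the harmonic function. Controlling this requires that the stopping time be calibrated to the local Carleson mass of $\nu$, choosing $\theta$ where $\nu$ does not concentrate; this is precisely the mechanism furnished by Bourgain \cite{Bourgain} and Mozolyako--Havin \cite{MozHav}, and the delicate point I expect to confront is verifying that their selection simultaneously tames the off-cone contribution. Finally, rewriting $\int_{B^d}|\nabla u|\,p(\cdot,\theta)\,dA$ as a weighted occupation integral of Brownian motion conditioned to exit at $\theta$ yields the stochastic reading announced in the abstract and, via conformal invariance, the transfer to planar simply connected domains.
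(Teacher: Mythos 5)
Your diagnosis of the obstruction is correct and matches the paper's starting point: averaging the area integral over $\theta$ against surface measure diverges, and the good $\theta$ must come from the Mozolyako--Havin measure $\nu$ rather than from Lebesgue pigeonholing. But your proposal stops exactly where the proof has to begin. The ``heart of the argument'' in your plan --- running the stopping time so that the selected branch controls the off-cone tail $\sum_Q 2^{-(n-k)d}u_Q$, and checking that the cone part is controlled at all --- is precisely what you leave unverified (``the delicate point I expect to confront''). Note in particular that Mozolyako--Havin bound the variation along normals, i.e.\ a line integral; the cone portion $\int_{\Gamma(\theta)}|\nabla u|(1-|w|)^{1-d}\,dA$ is already a strictly larger object (the paper itself remarks that the area integral dominates the radial variation), so the claim that their selection bounds it by $c\,u(0)$ is an assertion, not a proof. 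As written, the proposal is a programme with a genuine gap at its central estimate, and filling it along the lines you sketch would amount to redoing the Mozolyako--Havin construction with a stronger stopping rule.

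The paper avoids the entire dyadic/stopping-time analysis by an exact kernel identity. Writing $\sigma(x)$ for the normalized gradient direction and using the semigroup property $u_r(\zeta)=\int_S p_{\sqrt r}(\zeta,\alpha)u_{\sqrt r}(\alpha)\,d\alpha$, one gets $|\nabla u(r\zeta)|=\int_S c_{\sqrt r}(\zeta,\alpha)u_{\sqrt r}(\alpha)\,d\alpha$ with $c_r(\zeta,\theta)=\frac{\partial p}{\partial \sigma(r^2\zeta)}(r\zeta,\theta)$. Substituting this into the area integral and using the symmetry of $p_r$ together with the pointwise bound $p_r\le 2p_{\sqrt r}$, the area integral at $\theta$ is dominated by $4\int_0^1 B_s(u_s)(\theta)\,ds$, whose $\nu$-average is exactly the quantity $J$ that Mozolyako--Havin already proved is at most $c\,u(0)$. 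Since $\nu$ is a probability measure, some $\theta$ works. The idea missing from your proposal is therefore that no new selection argument is needed: after a change of variables the Poisson-weighted area integral \emph{is}, algebraically, the object Mozolyako--Havin control, and the only input beyond their theorem is this semigroup and symmetry manipulation of the Poisson kernel.
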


Note that the area integral is obviously larger than the line integrals defining the usual radial variation. \thref{integral} is proved in section \ref{Proof of Th 2}.

In section \ref{last section} we further investigate the significance of the integral in \thref{integral}. Utilizing its conformal invariance we are able to transfer it to arbitrary simply connected domains and also provide its stochastic interpretation using Brownian motion. We complement our work in section \ref{last section} by presenting several connected open problems.

\section{Preliminaries}
We use the notation $B$ for the unit ball of $\mathbb{R}^d$, $S$ for its boundary the unit sphere, $\mathbb{D}$ for the unit disc of $\mathbb{C}$, $B_r$ or $B(r)$ for the ball with center $0$ and radius $r$. The Euclidean distance between two points or a set and a point will be denoted by $d(\cdot,\cdot)$ and the diameter of a set $A$ with $\textrm{diam}(A)$.
For domains $E$ their boundary is denoted by $\partial E$, the inward unit vector of a point $x$ of $\partial E$, if it is well-defined, by $N(x)$.

\paragraph*{Hyperbolic distance/metric:}
On $\mathbb{D}$ the hyperbolic length of a smooth curve $\gamma$ is given by 
\begin{equation*}
2\int\limits_{\gamma}\frac{d|z|}{1-|z|^2}.
\end{equation*}
The hyperbolic distance between two points $z_0$ and $z_1$ is the infimum of the hyperbolic lengths of all piecewise smooth curves in $\mathbb{D}$ with endpoints $z_0$ and $z_1$. It is invariant under conformal self-mappings of the disc. The geodesics in this metric are circles orthogonal to $\{|z|=1\}$. The distance from $0$ to an arbitrary point $z_0\in \mathbb{D}$ is given by $\log\left(\frac{1+|z_0|}{1-|z_0|}\right)$. See \cite[Section 4.6]{Pom}.

\paragraph*{Bloch functions:}
A function $b$ on the unit ball $B$ of $\mathbb{R}^d$ is called a Bloch function if it is harmonic and the semi-norm $\|b\|:=\sup\limits_{z\in B} |\nabla b(z)|(1-|z|)$ is finite. Bloch functions are Lipschitz with respect to the hyperbolic metric. This means by definition that there is a constant $L$ such that for all $z,w\in B^d$
\begin{equation*}
|b(z)-b(w)|\leq L d_h(z,w)
\end{equation*} 
where $d_h(z,w)$ is the hyperbolic distance between $z$ and $w$.
See \cite[Section 4.2]{Pom}.

\paragraph*{Poisson kernel:}
The Poisson kernel on the unit ball $p:\mathbb{B^d}\times S \rightarrow\mathbb{R}$ is given by $p(z,\zeta):=\frac{1-|z|^2}{\omega_{d-1}|\zeta-z|^d}$ for $|z|<1$, $|\zeta|=1$ and $\omega_{d-1}$ the surface area of the unit sphere.
Especially in section \ref{Proof of Th 2} we use $p_r(\theta,\zeta)$ instead of $p(r\theta,\zeta)$, generating a family of kernels $(p_r)_{r\in [0,1)}$. Analogously, we write $u_r(\zeta)$ for $u(r\zeta)$ for functions $u$ on the unit ball.  

\paragraph*{Green's function:}
A Green's function for a domain $\Omega$ is a function $g:\Omega\times\Omega\rightarrow (-\infty,\infty]$ such that for each $w\in\Omega$
\begin{enumerate}
\item $g(\cdot,w)$ is harmonic on $\Omega\setminus \{w\}$ and bounded outside each neighbourhood of $w$
\item $g(w,w)=\infty$ and as $z\rightarrow w$
\begin{equation*}
g(z,w)=
\begin{cases}
\log|z|+O(1)&w=\infty\\
-\log|z-w|+O(1)&w\neq \infty
\end{cases}
\end{equation*}
\item $g(z,w)\rightarrow 0$ as $z\rightarrow\zeta$ and $\zeta \in \partial\Omega$.
\end{enumerate}
See \cite[Section 4.4]{Ransford}.

\paragraph*{Harnack's inequality:}
We will use Harnack's inequality to compare values of positive harmonic functions and to get a bound for their gradients. 
\begin{theorem}
Let $h$ be a positive harmonic function on the disc $B(w,\rho)$. Then for $r<\rho$ and $0\leq t<2\pi$
\begin{equation*}
\frac{\rho-r}{\rho+r}\leq h(w+re^{it})\leq \frac{\rho+r}{\rho-r}.
\end{equation*}
\end{theorem}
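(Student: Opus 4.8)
The plan is to derive Harnack's inequality directly from the Poisson integral representation of positive harmonic functions on the disc. Since $h$ is only assumed harmonic on the \emph{open} disc $B(w,\rho)$, I would first fix a radius $r<\rho'<\rho$ and work on the closed disc $\overline{B(w,\rho')}$, on which $h$ is harmonic in the interior and continuous up to the boundary; this is exactly the setting in which the Poisson formula is valid, and the stated bound will then follow by letting $\rho'\to\rho$ at the end.

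On $\overline{B(w,\rho')}$ the Poisson integral formula gives, for $r<\rho'$,
\begin{equation*}
h(w+re^{it})=\frac{1}{2\pi}\int_0^{2\pi}\frac{{\rho'}^2-r^2}{{\rho'}^2-2\rho' r\cos(t-\phi)+r^2}\,h(w+\rho'e^{i\phi})\,d\phi.
\end{equation*}
The key elementary step is to bound the Poisson kernel independently of $\phi$. Since $\cos(t-\phi)\in[-1,1]$, the denominator satisfies $(\rho'-r)^2\le {\rho'}^2-2\rho' r\cos(t-\phi)+r^2\le(\rho'+r)^2$, whence
\begin{equation*}
\frac{\rho'-r}{\rho'+r}\le\frac{{\rho'}^2-r^2}{{\rho'}^2-2\rho' r\cos(t-\phi)+r^2}\le\frac{\rho'+r}{\rho'-r}.
\end{equation*}

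Because $h\ge 0$, I can multiply these pointwise bounds by $h(w+\rho'e^{i\phi})$ and integrate over $\phi$; combined with the mean value property $\frac{1}{2\pi}\int_0^{2\pi}h(w+\rho'e^{i\phi})\,d\phi=h(w)$ (the case $r=0$ of the same formula), this yields
\begin{equation*}
\frac{\rho'-r}{\rho'+r}\,h(w)\le h(w+re^{it})\le\frac{\rho'+r}{\rho'-r}\,h(w).
\end{equation*}
Letting $\rho'\to\rho$ then gives the asserted inequalities (which appear here in the normalization $h(w)=1$). I expect the only genuine obstacle to be the boundary regularity needed to invoke the Poisson formula, which is precisely why I pass to the slightly smaller radius $\rho'$ and take a limit; the rest is the elementary kernel estimate above. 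Note that positivity of $h$ is used in an essential way, since it is what allows the pointwise kernel bounds to survive after integration against $h$.
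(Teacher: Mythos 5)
Your proof is correct and is essentially the standard argument: the paper offers no proof of its own here but simply cites Ransford (Theorem 1.3.1), whose proof is exactly your Poisson-kernel estimate on a slightly smaller disc $\overline{B(w,\rho')}$ followed by $\rho'\to\rho$. You are also right to flag that the inequality as printed in the paper is missing the factor $h(w)$ on both sides (a typo in the paper's statement); what your argument actually proves, $\frac{\rho-r}{\rho+r}\,h(w)\leq h(w+re^{it})\leq \frac{\rho+r}{\rho-r}\,h(w)$, is the correct form and the one the paper subsequently uses.
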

See \cite[Theorem 1.3.1]{Ransford}.

\paragraph*{Harmonic measure and harmonic majorant:}
We use the notation $w^{z_0}(F,E)$ for the harmonic measure with pole $z_0$ of $F\subset \partial E$.
A harmonic majorant of a function on a given domain is a harmonic function which is pointwise larger or equal to the function.

\paragraph*{Martin boundary:}
To define the Martin boundary of a domain we consider $M(x,y):= \frac{g(x,y)}{g(x_0,y)}$ where $x_0$ is a fixed point in the domain. The function $x\mapsto M(x,y)$ is continuous for $y\in\Omega\setminus\{x\}$. We now use the theorem of Constantinescu-Cornea (see \cite[Theorem 7.2]{Bass} or \cite[Theorem 12.1]{Helms}) to get a compact set $\Omega^*$, unique up to homeomorphisms such that
\begin{enumerate}
\item $\Omega$ is a dense subset of $\Omega^*$,
\item for each $y\in\Omega$ the function $x\mapsto M(x,y)$ has a continuous extension to $\Omega^*$ and
\item the extended functions separate points of $\Omega^*\setminus\Omega$. 
\end{enumerate}
The set $\Omega^*\setminus\Omega$ is the Martin boundary of $\Omega$ and denoted by $\partial_M \Omega$. The extensions of $M$ are called Martin kernels and denoted by $k^{\Omega}$. Martin kernels provide the following fundamental representation theorem for positive harmonic functions. 
\begin{theorem}
For every positive harmonic function $h$ on $\Omega$ there is a measure $\nu$ concentrated on $\partial_M\Omega$ such that 
\begin{equation*}
h(x)=\int k^\Omega(x,y)d\nu(y).
\end{equation*}
\end{theorem}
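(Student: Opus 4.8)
The plan is to obtain $\nu$ as a weak-$*$ limit of explicit measures built on an exhaustion of $\Omega$, normalised so that the base point $x_0$ forces a uniform mass bound. Fix the point $x_0$ used in the definition of $M$, and choose relatively compact open sets $\Omega_n$ with smooth boundary, $x_0\in\Omega_1$, $\overline{\Omega_n}\subset\Omega_{n+1}$ and $\bigcup_n\Omega_n=\Omega$. The starting point is the elementary fact that a positive harmonic function equals its own Poisson integral over each subdomain: for $x\in\Omega_n$
\begin{equation*}
h(x)=\int_{\partial\Omega_n}h(y)\,d\omega^{x}_{\Omega_n}(y).
\end{equation*}

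First I would factor the harmonic measure through the base point. Writing $k_n(x,y)=\tfrac{d\omega^{x}_{\Omega_n}}{d\omega^{x_0}_{\Omega_n}}(y)$ for the subdomain Poisson kernel, which for each fixed $y\in\partial\Omega_n$ is a positive harmonic function of $x$ with $k_n(x_0,y)=1$, the identity above becomes $h(x)=\int_{\partial\Omega_n}k_n(x,y)\,d\mu_n(y)$ with $d\mu_n(y)=h(y)\,d\omega^{x_0}_{\Omega_n}(y)$. Evaluating at $x=x_0$ gives the crucial normalisation $\mu_n(\partial\Omega_n)=\int_{\partial\Omega_n}h\,d\omega^{x_0}_{\Omega_n}=h(x_0)$, so all the $\mu_n$ carry the same finite total mass. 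Regarding each $\mu_n$ as a Borel measure on the compact Martin space $\Omega^{*}$, I extract a weak-$*$ convergent subsequence $\mu_{n_j}\to\nu$.

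Next I would locate $\nu$ on the boundary and pass to the limit in the kernels. Since $\mu_n$ is carried by $\partial\Omega_n$ and these surfaces eventually avoid any compact $K\subset\Omega$, for every open $U$ with $\overline{U}\subset\Omega$ one has $\mu_{n_j}(U)=0$ for large $j$, whence $\nu(U)=0$; as $\Omega$ is a countable union of such sets, $\nu$ is concentrated on $\Omega^{*}\setminus\Omega=\partial_M\Omega$. It remains to identify $\lim_j k_{n_j}(x,\cdot)$ with the Martin kernel $k^{\Omega}(x,\cdot)$. Both $k_{n_j}(x,\cdot)$ and $M(x,\cdot)=g(x,\cdot)/g(x_0,\cdot)$ are positive and harmonic in $x$ and equal to $1$ at $x_0$, and Harnack's inequality keeps the family uniformly controlled on compacta; using the monotone convergence $g_{\Omega_{n}}\uparrow g_{\Omega}$ of Green functions together with the continuous extension property (2) of the Martin compactification, one shows $k_{n_j}(x,\cdot)\to k^{\Omega}(x,\cdot)$ closely enough to pass to the limit, giving $h(x)=\int_{\partial_M\Omega}k^{\Omega}(x,y)\,d\nu(y)$.

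I expect the genuine difficulty to lie in this last step. Weak-$*$ convergence only tests against functions continuous on all of $\Omega^{*}$, whereas the kernels $k_{n_j}(x,\cdot)$ live on the moving surfaces $\partial\Omega_n$ and must be compared with the global kernel near the escaping supports; making the interchange of limit and integral rigorous is precisely where the defining properties of $\Omega^{*}$ — the density of $\Omega$, the continuity of the extended Martin kernels, and their separation of boundary points — are indispensable, with Harnack's inequality supplying the uniform comparability that prevents loss of mass. The normalisation $k(x_0,\cdot)\equiv 1$, which fixed the total mass once and for all, is what guarantees that the limit measure is finite and that no mass is lost to or gained from the interior in the limit.
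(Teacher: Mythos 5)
This statement is quoted in the paper's preliminaries as a classical result (with references to Bass and Helms); the paper gives no proof of its own, so I am comparing your sketch with the standard textbook argument. Your overall architecture -- exhaust $\Omega$ by relatively compact subdomains, normalise at the base point so that all approximating measures have total mass $h(x_0)$, extract a weak-$*$ limit on the compact space $\Omega^*$, and show the limit charges only $\partial_M\Omega$ -- is the right shape, and the steps you carry out (the mass identity $\mu_n(\partial\Omega_n)=h(x_0)$, the portmanteau argument giving $\nu(\Omega)=0$) are correct.

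The genuine gap is exactly where you place it, and it is not a technicality one can wave at: you represent $h$ against the \emph{subdomain} kernels $k_n(x,y)=d\omega^{x}_{\Omega_n}/d\omega^{x_0}_{\Omega_n}(y)$, which for smooth $\Omega_n$ are ratios of normal derivatives of $g_{\Omega_n}$ on the moving surface $\partial\Omega_n$, whereas the Martin kernel is defined through ratios $g_{\Omega}(x,y)/g_{\Omega}(x_0,y)$ with $y$ \emph{interior} to $\Omega$. The monotone convergence $g_{\Omega_n}\uparrow g_\Omega$ gives no control on normal derivatives along boundaries that themselves move, so the asserted convergence $k_{n_j}(x,\cdot)\to k^\Omega(x,\cdot)$ ``closely enough'' is unproved and is really the whole theorem; moreover weak-$*$ convergence of $\mu_{n_j}$ cannot by itself handle an integrand that changes with $j$. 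The classical proof sidesteps this by choosing a different approximating representation: one takes $h_n$ to be the reduced function (balayage) of $h$ onto $\Omega\setminus\Omega_n$, which is a Green potential of the \emph{full} domain, $h_n(x)=\int g_\Omega(x,y)\,d\mu_n(y)=\int M(x,y)\,d\nu_n(y)$ with $d\nu_n=g_\Omega(x_0,\cdot)\,d\mu_n$ and $\nu_n(\Omega^*)\le h(x_0)$, and $h_n\to h$ since $h$ is harmonic. Now the integrand $M(x,\cdot)$ is a single fixed function which, after modification on a small neighbourhood of its pole at $x$ (a neighbourhood the supports of $\nu_n$ eventually avoid), extends continuously to $\Omega^*$ by the defining property (2) of the Martin compactification; weak-$*$ convergence of $\nu_n$ then passes the limit through the integral with no further work. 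If you want to keep your harmonic-measure formulation you must supply a boundary Harnack / kernel-comparison argument identifying the two families of kernels in the limit, which is substantially harder than the balayage route.
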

See \cite[Section II.7]{Bass} or \cite[Chapter 12]{Helms}.

\paragraph*{Riemann Mapping Theorem}
In the last section we will use the Riemann mapping theorem to transfer \thref{integral} to arbitrary simply connected domains. 
\begin{theorem}
Let $\Omega$ be a simply connected proper subdomain of $\mathbb{C}$ and $w_0\in \Omega$. Then there is a conformal map $\gamma:\Omega\rightarrow \mathbb{D}$ with $\gamma(w_0)=0$.
\end{theorem}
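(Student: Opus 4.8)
The plan is to prove this by the classical Koebe extremal-problem argument. First I would introduce the family
\[
\mathcal{F} = \{ f:\Omega\to\mathbb{D} : f \text{ holomorphic and injective}, \ f(w_0)=0 \}
\]
and verify that it is nonempty. Since $\Omega\neq\mathbb{C}$, choose a point $a\notin\Omega$; because $\Omega$ is simply connected and $z-a$ is nonvanishing on $\Omega$, there is a single-valued holomorphic branch $h(z)=\sqrt{z-a}$. This $h$ is injective, and one checks that $h(\Omega)$ is disjoint from its reflection $-h(\Omega)$, so $h(\Omega)$ omits an entire disc $D(-w_1,\rho)$ around some boundary point $-w_1$ of its image. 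Composing $h$ with the inversion $w\mapsto \frac{\rho}{2(w+w_1)}$, which carries the exterior of that disc into $\mathbb{D}$, and then post-composing with a disc automorphism to send the value at $w_0$ to $0$, produces an element of $\mathcal{F}$.

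Next I would set $s=\sup_{f\in\mathcal{F}}|f'(w_0)|$ and extract an extremal map. Since every member of $\mathcal{F}$ takes values in $\mathbb{D}$, the family is uniformly bounded and hence normal by Montel's theorem; a sequence $f_n$ with $|f_n'(w_0)|\to s$ therefore has a locally uniformly convergent subsequence with holomorphic limit $\gamma$. One verifies $\gamma(w_0)=0$ and $|\gamma'(w_0)|=s$, so $s<\infty$ and $\gamma$ is nonconstant. Hurwitz's theorem then forces $\gamma$ to be injective, being a locally uniform limit of injective holomorphic maps, so that $\gamma\in\mathcal{F}$.

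The main obstacle is the final step: proving that this extremal $\gamma$ is surjective onto $\mathbb{D}$. I would argue by contradiction, assuming $\gamma$ omits some value $\alpha\in\mathbb{D}$. The idea is to apply the disc automorphism $\varphi_\alpha(w)=\frac{w-\alpha}{1-\bar\alpha w}$ to move the omitted value to the origin, extract a single-valued holomorphic square root $F$ of $\varphi_\alpha\circ\gamma$ (possible precisely because $\Omega$ is simply connected and $\varphi_\alpha\circ\gamma$ is nonvanishing), and finally post-compose with a further disc automorphism normalizing $F$ at $w_0$. A chain-rule computation, together with the fact that the squaring map strictly contracts at interior points of $\mathbb{D}$, shows that the resulting map lies in $\mathcal{F}$ yet has strictly larger derivative modulus at $w_0$, contradicting the maximality of $s$. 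Hence $\gamma$ is onto and is the desired conformal map; adjusting the argument of $\gamma'(w_0)$ afterward, if a normalized derivative is wanted, is merely cosmetic.
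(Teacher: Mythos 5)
The paper does not prove this statement at all: it is quoted in the Preliminaries as the classical Riemann Mapping Theorem, with a pointer to Ransford's book (where the result is obtained in a potential-theoretic framework). Your proposal is the standard Koebe extremal argument via normal families, and it is correct in all essentials: the square-root construction shows $\mathcal{F}\neq\emptyset$, Montel and Hurwitz produce an injective extremal $\gamma$, and the square-root/automorphism trick rules out omitted values. Three small points are worth making explicit if you write it out in full. First, the nonconstancy of the limit requires $s>0$, which follows because $\mathcal{F}$ is nonempty and an injective holomorphic map has nonvanishing derivative, so every $f\in\mathcal{F}$ already has $|f'(w_0)|>0$. Second, the locally uniform limit a priori maps into $\overline{\mathbb{D}}$; you need the open mapping theorem (or the maximum principle) to conclude $\gamma(\Omega)\subset\mathbb{D}$, so that $\gamma\in\mathcal{F}$. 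Third, the phrase that ``the squaring map strictly contracts at interior points'' is best replaced by the precise mechanism: writing $\gamma=G\circ\psi$ where $\psi$ is your new competitor and $G=\varphi_\alpha^{-1}\circ(w\mapsto w^2)\circ\varphi_{F(w_0)}^{-1}$ is a holomorphic self-map of $\mathbb{D}$ fixing $0$ that is not injective, the Schwarz lemma gives $|G'(0)|<1$ and hence $|\psi'(w_0)|>s$. With these details supplied, the argument is a complete and entirely standard proof, independent of (and more elementary than) the potential-theoretic route the paper's reference takes.
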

See \cite[Theorem 4.4.11]{Ransford}.

\paragraph*{Prime ends:}
In a simply connected and bounded domain $\Omega\subset\mathbb{C}$ a crosscut $C$ of $\Omega$ is an open Jordan arc in $\Omega$ such that $\bar{C}=C\cup \{a,b\}$ with $a,b\in\partial\Omega$. A sequence $(C_n)$ of crosscuts of $\Omega$ is called a null-chain if $\bar{C}_n\cap\bar{C}_{n+1}=\emptyset$, $\textrm{diam}(C_n)\rightarrow 0$ and $C_{n+1}$ and $C_0$ are in different components of $\Omega \setminus C_n$. The component of $\Omega\setminus C_n$ not containing $C_0$ is called $V_n$. Two null-chains $(C_n)$ and $(C'_n)$ are equivalent if for every sufficiently large $m$ there exists $n$ such that $V_n\subset V'_m$ and $V'_n\subset V_m$. The equivalence classes of null-chains are called prime ends of $\Omega$. The set of prime ends is denoted by $P(\Omega)$. 

The ordinary topology on $\Omega$ is extended in the following way. For a subdomain $A\subset \Omega$, $\mathcal{E}_A$ is the set of prime ends that contain a null-chain whose crosscuts all lie in $A$. We define the new topology by adding the set $A\cup \mathcal{E}_A$ as a neighbourhood of each point in $A$ and each prime end in $\mathcal{E}_A$. In this extended topology $\Omega$ is dense in $\Omega\cup P(\Omega)$, $\Omega\cup P(\Omega)$ is a compact space and therefore called prime end compactification of $\Omega$. Now the following holds true.
\begin{theorem}
If $\gamma:\Omega\rightarrow \mathbb{D}$ is a conformal homeomorphism, it can be extended to a homeomorphism $\hat{\gamma}$ between the prime end compactification of $\Omega$ and $\bar{\mathbb{D}}$.
\end{theorem}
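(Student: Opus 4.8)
The plan is to prove this by showing that the prime ends of $\Omega$ stand in a canonical bijection with the boundary points of $\mathbb{D}$, and that this bijection, glued to $\gamma$, assembles into a homeomorphism of the two compactifications. Writing $f=\gamma^{-1}\colon\mathbb{D}\to\Omega$ for the inverse map, I would first record the trivial half: for the disc every null-chain is equivalent to a sequence of circular arcs shrinking to a single point of $\partial\mathbb{D}$, so $P(\mathbb{D})$ is naturally identified with $\partial\mathbb{D}$ and the prime-end topology on $\mathbb{D}\cup P(\mathbb{D})$ agrees with the Euclidean topology on $\bar{\mathbb{D}}$. The actual content is to transport null-chains across $f$ and $\gamma$.

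The key analytic ingredient, and the step I expect to be the main obstacle, is a length–area (Courant–Lebesgue type) estimate. Fix $\zeta\in\partial\mathbb{D}$ and for small $\rho$ let $\sigma_\rho=\{z\in\mathbb{D}\colon |z-\zeta|=\rho\}$ be the circular crosscut of $\mathbb{D}$. Since $f$ is conformal and $\Omega$ is bounded, its image area is finite, and writing that area in polar coordinates about $\zeta$, together with the Cauchy–Schwarz inequality applied on each arc $\sigma_\rho$ (whose length is at most $\pi\rho$), yields
\[
\int_0^{\delta}\left(\int_{\sigma_\rho}|f'|\,ds\right)^2\frac{d\rho}{\rho}\le \pi\,\mathrm{Area}(\Omega)<\infty.
\]
Hence there is a sequence $\rho_n\downarrow 0$ along which the Euclidean length, and therefore the diameter, of the crosscut $C_n:=f(\sigma_{\rho_n})$ tends to $0$. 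The arcs $C_n$ form a null-chain in $\Omega$ and so determine a prime end, and I would verify that their endpoints land on $\partial\Omega$ and that different admissible sequences of radii produce equivalent null-chains, so the assignment $\zeta\mapsto[(C_n)]\in P(\Omega)$ is well defined.

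Next I would show this map $\partial\mathbb{D}\to P(\Omega)$ is a bijection. For surjectivity, take any null-chain $(\Gamma_n)$ of $\Omega$; its preimages $\gamma(\Gamma_n)$ are crosscuts of $\mathbb{D}$, and the same length–area inequality applied to $\gamma$ (whose target $\mathbb{D}$ again has finite area) forces the $\gamma(\Gamma_n)$ to shrink, so their closures accumulate at a single point $\zeta\in\partial\mathbb{D}$; comparing the nested regions $V_n$ then shows $(\Gamma_n)$ is equivalent to the null-chain built from $\zeta$. Injectivity follows since distinct points $\zeta\neq\zeta'$ can be separated by a crosscut of $\mathbb{D}$ whose $f$-image separates the corresponding null-chains, placing them in different equivalence classes.

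Finally I would set $\hat{\gamma}$ equal to $\gamma$ on $\Omega$ and to the inverse of this bijection on $P(\Omega)$, giving a bijection $\hat{\gamma}\colon\Omega\cup P(\Omega)\to\bar{\mathbb{D}}$ extending $\gamma$. Continuity is checked against the prime-end topology: a basic neighbourhood $A\cup\mathcal{E}_A$ of a prime end is carried by $\gamma$ onto an open subset of $\mathbb{D}$ whose closure meets $\partial\mathbb{D}$ in an arc neighbourhood of the corresponding boundary point, and symmetrically for $\hat{\gamma}^{-1}$. Since both $\Omega\cup P(\Omega)$ and $\bar{\mathbb{D}}$ are compact Hausdorff spaces, a continuous bijection between them is automatically a homeomorphism, completing the argument. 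The only delicate points are the diameter-shrinking statements, all of which rest on the single length–area inequality above.
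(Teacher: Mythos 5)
The paper offers no proof of this statement; it is quoted as Carathéodory's prime end theorem with references to Pommerenke (Section 2.4) and Collingwood--Lohwater, and your length--area (Courant--Lebesgue) argument is precisely the classical proof given there, so your approach coincides with the one the paper relies on. Your sketch is correct, including the direction of the inequality and the compact-Hausdorff endgame; the only step stated more loosely than it deserves is surjectivity, where the length--area estimate must be re-centred at boundary points of $\Omega$ to show $\operatorname{diam}\gamma(V_n)\to 0$, rather than being literally ``the same'' inequality.
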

The prime ends statisfy the conditions (1-3) of the Martin boundary. As the Martin boundary is unique up to homeomorphisms, for simply connected domains prime ends and Martin boundary coincide in this sense.
See \cite[Section 2.4]{Pom} or \cite[Chapter 9]{CoLo}.

\paragraph*{Coarea formula:}
For an open set $\Omega\subset \mathbb{R}^d$ a real-valued Lipschitz function $a$ and an $\mathcal{L}^1$ function $b$ we have 
\begin{equation*}
\int\limits_{\Omega} |\nabla a(x)| b(x)dx=\int\limits_{\mathbb{R}} \int\limits_{a^{-1}(t)}g(x) d\sigma_t(x) dt 
\end{equation*}
where $\sigma_t$ is the $n-1$ dimensional Hausdorff-measure on the preimage of $t$ under the function $a$.
See \cite[Section 3.4.2]{Evans}.

\paragraph*{Brownian motion and local time:}
We call $(X_t)_{t\in \mathbb{R}^+}$ Brownian motion started at $0$ if it is a stochastic process on a probability space $(\Omega,\mathcal{F}, \mathbb{P})$ such that
\begin{enumerate}
\item $X_0=0$ $\mathbb{P}$-almost surely,
\item the increments $X_{t_1}-X_{t_0}$, $X_{t_2}-X_{t_1}$, $X_{t_m}-X_{t_{m-1}}$ are independent for $0\leq t_1<t_2<...<t_m$,
\item $X_t-X_s\sim \mathcal{N}(0,t-s)$ for $0\leq s\leq t$, in other words
$\mathbb{P}(X_t-X_s\in A)=\frac{1}{\sqrt{2\pi(t-s)}}\int\limits_A e^{-\frac{1}{2}\left(\frac{t}{\sqrt{t-s}}\right)^2}dt$ and
\item the paths are almost surely continuous. 
\end{enumerate}
If $X^{(1)},X^{(2)},\cdots,X^{(d)}$ are independent Brownian motions starting in $0$, the stochastic process given by $Z=(X^{(1)},X^{(2)},\cdots,X^{(d)})_t$ is called a $d$-dimensional Brownian motion. 
The limit $L^x_t:=\lim\limits_{\epsilon\rightarrow 0} \frac{1}{2\epsilon}\int 1_{\{x-\epsilon<X_s<x+\epsilon\}} ds$ exists and is called local time of Brownian motion. 
See\cite[Section I.2,I.6]{Bass}.

\section{Proof of Theorem 1}

The proof of \thref{Bloch} is based on a result of Mozolyako and Havin (\cite{MozHav}). 
We use their result to find a point in a given subset of the boundary of a Lipschitz domain such that the variation along the normal to the boundary is bounded. 

The theorem of Mozolyako and Havin (\cite{MozHav}) will be used as stated in \thref{Moz}. 
Note however that the original statement in \cite{MozHav} involves $\mathcal{C}^2$ domains. However, it turns out - and is known - that their argument may be modified so as to work for Lipschitz domains. 
\begin{theorem}
\thlabel{Moz}
Let $u$ be a positive harmonic function on a Lipschitz domain $O$ with starcenter $z_0$ and boundary $D$. Let $N(p)$ be any direction at $p$ pointing "well-inside" the domain and $r$ be a positive function on $D$ such that $[p,p+r(p)N(p)]\subset O$ for all $p\in D$. Then for all surface balls $E\subset D$ with $\omega^{z_0}(E,O)\geq c$ there is a $p_0\in E$ and a harmonic majorant $H$ of the gradient such that 
\begin{equation*}
\int\limits_{0}\limits^{r(p_0)}  H(p+yN(p)) dy < c_1u(z_0)
\end{equation*}
where the constant $c_1$ only depends on the Lipschitz constant of the domain, the constant $c$ and the Harnack distance between $z_0$ and $p_0+r(p_0)N(p_0)$. 
\end{theorem}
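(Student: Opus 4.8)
The plan is to prove \thref{Moz} by an averaging argument over the surface ball $E$ together with a pigeonhole step, so that the whole statement reduces to a single mean estimate against harmonic measure. Writing $\delta(w)=d(w,D)$ for the distance to the boundary and $d\omega^{z_0}$ for the harmonic measure with pole $z_0$, I would first observe that it suffices to produce a harmonic majorant $H$ of $|\nabla u|$ with
\begin{equation*}
\int\limits_E\int\limits_0^{r(p)} H(p+yN(p))\,dy\,d\omega^{z_0}(p)\le C u(z_0).
\end{equation*}
Indeed, since $\omega^{z_0}(E,O)\ge c$, dividing by $\omega^{z_0}(E,O)$ shows that the $\omega^{z_0}$-average over $E$ is at most $(C/c)\,u(z_0)$, so at least one $p_0\in E$ realizes a value of the inner integral below this average; this is exactly the claim with $c_1=C/c$, and the stated dependence on $c$, the Lipschitz constant, and the Harnack distance between $z_0$ and $p_0+r(p_0)N(p_0)$ is inherited from $C$ and from the Harnack chain used to compare interior values with $u(z_0)$.

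Next I would convert this boundary-times-normal integral into a single volume integral over the collar $\Sigma=\{p+yN(p):p\in D,\ 0\le y\le r(p)\}$. Parametrising $\Sigma$ by $(p,y)\mapsto p+yN(p)$ and using the coarea formula, the Jacobian of the collar map is bounded above and below in terms of the Lipschitz constant precisely because the directions $N(p)$ point well inside the domain; moreover $d\omega^{z_0}(p)$ has density comparable to $\partial_n g(z_0,p)$, and the boundary vanishing of the Green's function gives $g(z_0,w)\approx y\,\partial_n g(z_0,p)$ for $w=p+yN(p)$. Combining these, the density produced by Fubini is comparable to $g(z_0,w)/\delta(w)$, so the target inequality becomes
\begin{equation*}
\int\limits_{\Sigma} H(w)\,\frac{g(z_0,w)}{\delta(w)}\,dw\le C u(z_0).
\end{equation*}
Since $g(z_0,\cdot)$ is the expected occupation density of Brownian motion started at $z_0$ and killed on $D$, this is the weighted path integral $\mathbb{E}_{z_0}\int_0^\tau H(W_t)\,\delta(W_t)^{-1}\,dt$, which is the form best suited to the stochastic reading promised later in the paper. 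Because $H\ge|\nabla u|$, the crux is to estimate the corresponding integral of $|\nabla u|$ itself and then to control the passage to a majorant on a good subregion.

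The heart of the matter, and the step I expect to be the main obstacle, is that $\int_{\Sigma}|\nabla u|\,g/\delta\,dw$ is genuinely divergent under the crude pointwise bound $|\nabla u|\le C u/\delta$; the theorem holds only because of cancellation in the normal direction, which is exactly the Bourgain--Mozolyako--Havin mechanism. I would decompose the approach region over $E$ into Whitney--Carleson pieces on which $u$ has bounded oscillation, using a stopping-time rule that halts a dyadic chain once $u$ has changed by a fixed factor. On each such piece the interior gradient estimate bounds the first-power normal integral by the oscillation of $u$ there, so the total normal variation is controlled by the sum of oscillations along the chain; the decisive point is to bound this summed oscillation in the $\omega^{z_0}$-mean. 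Here positivity of $u$ enters through $\int_D u\,d\omega^{z_0}=u(z_0)$ and a Chebyshev estimate discarding the small-measure region where $u$ is large, while the boundary Harnack principle controls the complementary smallness; the cancellation is cleanest when packaged as the $L^1$/$H^1$ martingale inequality for the positive martingale $M_t=u(W_t)$, whose quadratic variation has density $|\nabla u(W_t)|^2$. This furnishes a subset $E'\subset E$ with $\omega^{z_0}(E',O)\ge c$ on whose sawtooth region $u$ is bounded, so that there $|\nabla u|$ possesses a finite harmonic majorant $H$ obeying the required mean bound on its normal integral.

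Finally I would address the regularity reduction the text alludes to: the original Mozolyako--Havin estimates assume $C^2$ boundaries, but smoothness is used only in the collar Jacobian and in the comparison $g(z_0,w)/\delta(w)\approx$ harmonic-measure density. Both survive for Lipschitz domains once one replaces classical boundary regularity by Dahlberg's comparability of harmonic measure and surface measure and by the Lipschitz boundary Harnack principle, and chooses $N(p)$ inside a fixed interior cone so that the collar map stays bi-Lipschitz. I expect the stopping-time mean estimate of the previous paragraph, rather than this geometric bookkeeping, to be where the real work lies.
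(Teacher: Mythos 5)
First, a framing remark: the paper does not prove this statement at all; it is imported from Mozolyako and Havin \cite{MozHav} with only the comment that their $\mathcal{C}^2$ hypothesis can be relaxed to Lipschitz domains. So your sketch has to be judged against the known structure of their argument (partially described in Section \ref{Proof of Th 2}), and against that standard it has a genuine gap, located in your very first reduction. You propose to bound the $\omega^{z_0}$-average $\int_E\int_0^{r(p)}H(p+yN(p))\,dy\,d\omega^{z_0}(p)$ by $Cu(z_0)$ and then pigeonhole. That mean estimate is not merely unproven, it is false: already with $H=|\nabla u|$ the harmonic-measure average of the normal variation is generically infinite. Take $O=\mathbb{D}$ and $u(z)=p(z,1)$ the Poisson kernel with pole at $1$; along the radius through $e^{i\theta}$ the function rises to a maximum of order $1/|\theta|$ before decaying, so its radial variation is at least $c/|\theta|$ and $\int_S V(\theta)\,d\omega^{0}(\theta)=\infty$ while $u(0)=1$; any harmonic majorant only makes the left-hand side larger, and the theorem must cover surface balls $E$ containing the singularity. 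This is exactly why Bourgain's theorem and its extension are hard: the good point $p_0$ cannot be produced by averaging against a fixed natural measure such as $\omega^{z_0}$. Mozolyako and Havin instead construct a probability measure $\nu$ \emph{adapted to} $u$ (and localizable to $E$) for which $\int_S\int_0^1 B_r(u_r)\,dr\,d\nu\leq cu(0)$; that construction, via the kernels $b_r$ and an iteration scheme, is the entire content of the theorem and is absent from your outline.

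Your proposed repair of the divergence does not close the gap either. The ``$L^1$/$H^1$ martingale inequality for the positive martingale $M_t=u(W_t)$'' is also false: by Burkholder--Davis--Gundy, $\mathbb{E}\bigl[\bigl(\int_0^\tau|\nabla u(W_t)|^2\,dt\bigr)^{1/2}\bigr]$ is comparable to the $H^1$-norm of $u$, and positive harmonic functions lie in $h^1$ but not in $H^1$ (the Poisson kernel again: its nontangential maximal function behaves like $1/|\theta|$ and is not integrable). Equivalently, the summed oscillations along your stopped dyadic chains behave like $\sum_k 2^{n_k}$ over a $\pm1$ walk, whose expectation against harmonic measure diverges. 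The peripheral parts of your sketch --- the collar parametrization, the comparison $g(z_0,w)\approx\delta(w)\,\partial_n g(z_0,p)$, and the use of Dahlberg's theorem and the boundary Harnack principle to pass from $\mathcal{C}^2$ to Lipschitz boundaries --- are reasonable bookkeeping and do correspond to what is needed for the regularity reduction the paper alludes to, but the analytic heart of the proof is missing.
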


\paragraph*{Proof of Theorem 1}
The proof consists of 4 parts. 
\begin{enumerate}
\item First of all we construct Lipschitz domains on which we can use \thref{Moz}.
\item In the second part we look for points within a cone such that the variation along the segments connecting the points is suitably bounded.
\item Part 3 is dedicated to shifting the points constructed in part 2 onto one radius of the unit ball.
\item In the last part we collect the information of the previous parts to prove the theorem.
\end{enumerate}

\paragraph*{Construction of Lipschitz domains:}

We want to construct a Lipschitz domain $W(z_0)$ for an arbitrary point $z_0\in B$  on which $b$ is bounded from below. In the following $M\in \mathbb{N}$ is a large enough constant only depending on $\sup\limits_{z\in B} |\nabla b(z)|(1-|z|)$. The Lipschitz domain should satisfy the following conditions. 

\begin{enumerate}
\item The function $b-b(z_0)+M$ is positive on the whole domain.
\item The domain $W(z_0)$ has starcenter $z_0$ and the Lipschitz constant is independent of $z_0$. 
\end{enumerate}

\begin{itemize}
\item For $z \in B$ with $|z|>\frac{15}{16}$ we use the following notations:
\begin{align*}
I(z)&:=\{\zeta\in S: |z-\zeta|\leq 8(1-|z|)\}\\
r(z)&:=2|z|-1\\
T(z)&:=\{w\in\mathbb{B}: |w|=r(z), |w-z|\leq 2(1-|z|)\}.
\end{align*}
The domain $V(z)$ is the convex hull of $T(z)$ and $I(z)$, intersected with $B\setminus B_{r(z)}$ (see Figure \ref{V(z)}).

\begin{figure}[H]
	\centering
\includegraphics[width=0.9\textwidth, angle=0]{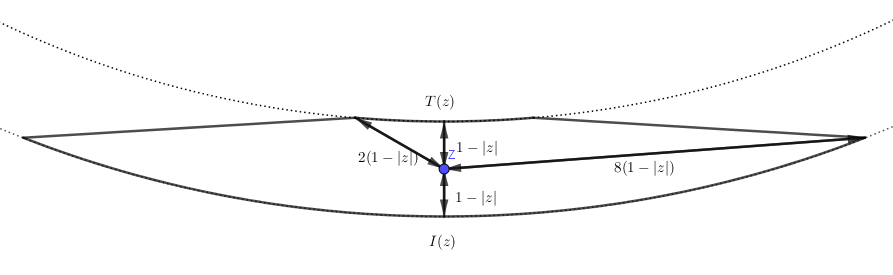}
	\caption{The domain $V(z)$}
\label{V(z)}	
\end{figure}

\item 
We consider the Whitney decomposition $(C_n)_{n\in \mathbb{N}}$ of $B$, so we have a decomposition of $B$ into disjoint cubes with
\begin{equation*}
\textrm{diam}(C_n)\leq d(C_n,S)\leq 4\, \textrm{diam}(C_n)
\end{equation*}
for $n\in \mathbb{N}$. Next, we fix $n\in \mathbb{N}$, check if there is a point $z\in C_n$ satisfying
\begin{equation*}
b(z)-b(z_0)\leq -M
\end{equation*}
and set $N_T:=\{n\in \mathbb{N}: \exists z \in C_n: b(z_n)-b(z_0)\leq -M\}$. For $n\in N_T$ select any $z_n\in C_n$ such that $b(z_n)-b(z_0)\leq -M$ holds true. Finally, we define $T:=\{z_n:n\in N_T\}$.

\item 
For all $z\in T$ we set $\tilde{V}(z)=V(z)$ if $\textrm{dist}(T(z),\partial V(z_0))>\frac{1}{2}(1-|z|)$. If the condition does not hold true, we  set $\tilde{T}(z):=\{w\in\mathbb{B}: |w|=r(z), |w-z|\leq 3(1-|z|)\}$ and $\tilde{V}(z)=\textrm{conv}(\tilde{T}(z)\cup I(z))\cap (B\setminus B_{r(z)})$. The boundary of $\tilde{V}(z)$ consists of three disjoint sets: the top $\tilde{T}(z)$, the bottom $\partial \tilde{V}(z)\cap S$ and $\tilde{L}(z)$.

\item 
The Lipschitz domain we were looking for is now given by 
\begin{equation*}
W(z_0):=V(z_0)\setminus \bigcup\limits_{z_n\in T}\tilde{V}(z_n).
\end{equation*}
Once again the boundary of $W(z_0)$ consists of different parts. First, we have $T(z_0)$ and $\partial W(z_0)\cap L(z_0)$. The new bottom is divided into three subsets $\partial W(z_0)\cap S$, the tops $\partial W(z_0)\cap\bigcup\limits_{z\in T} T(z)$ and $\partial W(z_0)\cap\bigcup\limits_{z\in T} L(z)$.

On the tops, so for $z\in\partial W(z_0)\cap\bigcup\limits_{z\in T} T(z)$ we have
\begin{equation*}
-2M < b(z)-b(z_0) < -\frac{M}{2}.
\end{equation*}
\end{itemize}

\paragraph*{Construction of points within a cone:}

In the following we construct a sequence of points such that the integral of a harmonic majorant of the gradient  along the segments connecting two consecutive points (i.e. the variation) is bounded. 

In the first step we look at $x_0=0$ and the corresponding domain $W(0)$. Now, by \thref{Moz}, we find a point $x_1$ in $\partial W(0)$ such that the variation of $b$ along the line segment connecting $x_1$ and $y_1:=x_1+N(x_1)$ is bounded by $c_1 M$. As $y_1$ has small hyperbolic distance from $x_0$, the variation along the interval $[y_1,x_0]$ is bounded because of Harnack's inequality.

Now we assume that points $x_0,...,x_n$ and $y_1,...,y_n$ are already chosen. We consider the domain $W(x_n)$ and scale it by a homothetic transformation such that the diameter is $100$. As $b-b(x_n)+M$ is a positive harmonic function on $W(x_n)$ we can apply \thref{Moz}. We get a point $x$ on the lower part of the boundary of $W(x_n)$ such that the variation along the line segment connecting $x$ and $y_{n+1}:=x+N(x)$ is bounded by $c_1 M$. We distinguish three cases.
\begin{enumerate}
\item If $x$ is in the unit sphere, we take $x_{n+1}:=x$ and stop the construction.

\item If $x$ is on a top, so in the  set $\partial W(x_n)\cap\bigcup\limits_{z\in T} T(z)$, we define $x_{n+1}:=x$.

\item If $x$ is  in $\partial W(x_n)\cap\bigcup\limits_{z\in T} L(z)$, we take the intersection point $y$ of the normal to the boundary at $x$ and the sphere with radius $r(z_n)$, which is the radius of the corresponding top, and set $x_{n+1}:=y$ (see Figure \ref{third case}).

\begin{figure}[H]
	\centering
  \includegraphics[width=0.7\textwidth, angle=0]{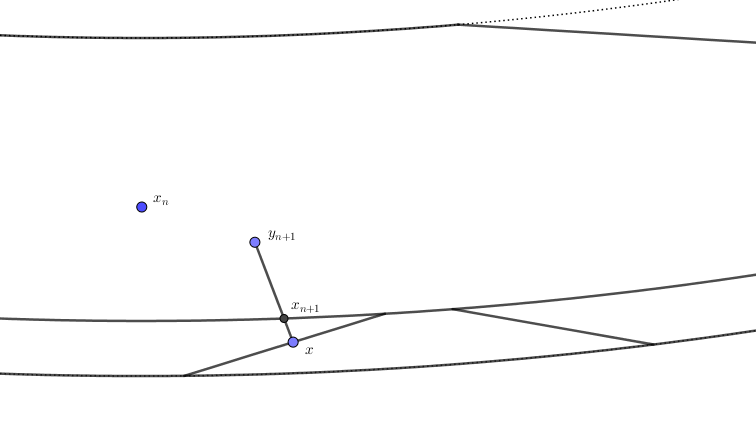}
	\caption {The case $x\in \partial W(x_n)\cap\bigcup\limits_{z\in T} L(z)$}
	\label{third case}
\end{figure}
\end{enumerate}

We have constructed two sequences of points $(x_n)_{n\in\mathbb{N}\cup \{0\}}$ and $(y_n)_{n\in\mathbb{N}}$, both converging to a point on the boundary $x$. They are contained in the cone with apex $x$ and opening angle $\pi /2$ and satisfy the following conditions:

\begin{enumerate}
\item $\int\limits_{[y_{n+1},x_{n+1}]}H_{n+1}(\zeta)d|\zeta|\leq c_1 M$,
\item $\int\limits_{[x_{n},y_{n+1}]}H_{n+1}(\zeta)d|\zeta|\leq c_1 M$,
\item $b(x_n)-b(0)\leq -n\frac{M}{2}$ and
\item for $\zeta \in [x_n,y_{n+1}]\cup[y_{n+1},x_{n+1}]$ we have $b(\zeta)-b(0)\leq -n\frac{M}{2}+2c_1M$
\end{enumerate}

where $H_{n+1}$ is a harmonic majorant of $|\nabla b|$ on the domain $W(x_n)$ (see Figure \ref{fig3}).

\begin{figure}[H]
	\centering
  \includegraphics[width=0.5\textwidth, angle=0]{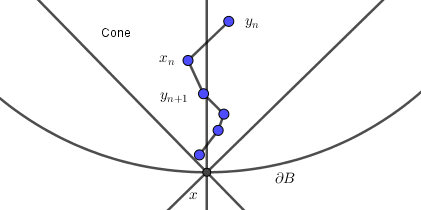}
	\caption {Cone, radius and some points}
	\label{fig3}
\end{figure}

\paragraph*{Shifting of points and corresponding segments:}

We want to shift the points $x_n$ and $y_n$ and the corresponding segments to the radius connecting $0$ and the limit $x$ of our point sequences.

Shifting $[x_n,y_{n+1}]$:
As in the construction for the segment $[x_n,y_{n+1}]$, we can use Harnacks inequaltity to bound the integral of the harmonic function along the segment $[a_n,b_{n+1}]=[|x_n| x, |y_{n+1}|x]$.

Shifting $[y_{n+1},x_{n+1}]$:
We shift $y_{n+1}$ to $b_{n+1}=|y_{n+1}|x$ and $x_{n+1}$ to $a_{n+1}=|x_{n+1}| x$ and distinguish the following two cases. 

Case 1: $|x_{n+1}|=1$. In this case we have already reached the unit sphere and $x_{n+1}=x$ so we already have a suitable bound for
\begin{equation*}
\int \limits_{[b_{n+1},a_{n+1}]} |\nabla b(\zeta)|d|\zeta|
\end{equation*} 
as $y_{n+1}=b_{n+1}$ and $x_{n+1}=a_{n+1}$. 

Case 2: If $|x_{n+1}|\neq 1$, we note that the distance of $x_{n+1}$ to the radius is bounded by $c|x_{n+1}|$ because the point is in the cone. Next, we take $s:=[y_{n+1},x_{n+1}]\cap B(0,1-\frac{3}{2} (1-|x_{n+1}|))$. The distance of $s$ to the boundary of $W(x_n)$ is at least $\frac{1}{2}(1-|x_{n+1}|)$ and therefore comparable to the distance from the radius to which we want to shift our segment. We can use Harnack's inequality once again and get a bound of
\begin{equation*}
\int\limits_{[(1-\frac{3}{2} (1-|x_{n+1}|))x,b_{n+1}]} |\nabla b(\zeta)|d|\zeta|
\end{equation*}
where $b_{n+1}:=|y_{n+1}|x$ (see Figure \ref{fig4}).

\begin{figure}[H]
	\centering
  \includegraphics[width=0.8\textwidth, angle=0]{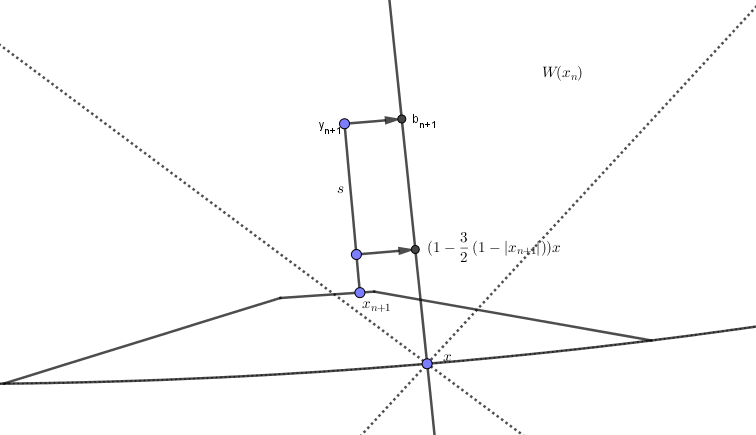}
	\caption {Case 2}
	\label{fig4}
\end{figure}

The segment $[a_{n+1},(1-\frac{3}{2} (1-|x_{n+1}|))x]$ with $a_{n+1}:=|x_{n+1}|x$ is left. Here we use the next harmonic majorant $H_{n+2}$ on the domain $W(x_{n+1})$. There we can easily bound 
\begin{equation*}
\int \limits_{[|x_{n+1}|x,(1-\frac{3}{2} (1-|x_{n+1}|))x]}H_{n+2}(\zeta)d|\zeta|
\end{equation*}
by $c_1M$.

\paragraph*{Collecting the information:}
We obtained a point $x$ on the unit sphere and sequences of points $(a_n)$ and $(b_n)$ on $[0,x]$ both converging to $x$ that satisfy:

\begin{enumerate}
\item $|a_n|<|b_{n+1}|<|a_{n+1}|$
\item $\int\limits_{[b_{n+1},a_{n+1}]}|\nabla b(\zeta)|d|\zeta|\leq c_1 M$
\item $\int\limits_{[a_{n},b_{n+1}]}|\nabla b(\zeta)|d|\zeta|\leq  c_1 M$
\item $b(a_n)-b(0)\leq -n\frac{M}{2}$
\item for $\zeta \in [a_n,b_{n+1}]\cup[b_{n+1},a_{n+1}]$ we have $b(\zeta)-b(0)\leq -n\frac{M}{2}+2c_1M$.
\end{enumerate}

To finish the proof we can write this in a more compact form as

\begin{align*}
&\int\limits_{[0,x]} |\nabla b(\zeta)|e^{b(\zeta)}d|\zeta|=\\
&=\sum \limits_{n=0}\limits^{\infty} \int\limits_{[a_{n+1},b_{n+1}]}|\nabla b(\zeta)|e^{b(\zeta)}d|\zeta|+\int\limits_{[b_{n+1},a_n]} |\nabla b(\zeta)|e^{b(\zeta)}d|\zeta|\leq\\
&\leq 2 e^{b(0)+2c_1 M}\sum\limits_{n=0}\limits^{\infty} e^{-n\frac{M}{2}} c_1 M <c(c_1,M,b(0)).
\end{align*}

\section{Proof of Theorem 2}\label{Proof of Th 2}

In the previous section we used the result of Mozolyako and Havin (\cite{MozHav}) to prove \thref{Bloch}. In this section we present a result of the analysis of the proof given in \cite{MozHav} to prove \thref{integral}.

Let $u$ be a positive harmonic function on $B^d$. Following the notation of Mozolyako and Havin in \cite{MozHav} we define $\sigma(x)$ as the normalized gradient of $u$ at the point $x$. By differentiating the Poisson kernel with respect to the first variable in direction $\sigma(x)$, we get a new kernel  $c_r(\zeta,\theta):=\frac{\partial p}{\partial \sigma(r^2\zeta)}(r\zeta,\theta)$.
We define the kernel $b_r(\zeta, \theta):=\int p_r(\zeta,x)c_r(x,\theta)dx$ and $B_r$ denotes the inegral operator $B_r(f)=\int \limits_S b_r(\zeta,\theta)f(\theta)d\theta$.

Mozolyako and Havin (\cite{MozHav}) proved that for every positive harmonic function $u$ on $B^{d}$ there is a probability measure $\nu$ such that
\begin{equation*}
J=\int\limits_{S}\int\limits_{0}\limits^{1} B_r(u_r)(\zeta)drd\nu(\zeta)\leq cu(0). 
\end{equation*}

Let $V(u)$ be the set of points in $S$ such that the radial variation of $u$ is bounded.
In \cite{MozHav} they showed that for every point $p\in S$ and $\rho > 0$ the Hausdorff dimension of $V(u)\cap B^d(p,\rho)$ is $n-1$.
We use the same fact to prove \thref{integral}.  

\begin{proof}
First we rewrite the integral in the theorem as follows
\begin{equation*}
\int\limits_{B^d} |\nabla u(w)|p(w,\theta)dA(w)=\int\limits_S\int\limits_S\int\limits_0\limits^1 c_{\sqrt{r}}(\zeta,\alpha)u_{\sqrt{r}}(\alpha)p_r(\zeta,\theta)r\,drd\zeta d\alpha.
\end{equation*}
Using the symmetry of $p_r$ and because $p_r\leq 2p_{\sqrt{r}}$ pointwise, we obtain 

\begin{equation*}
\int\limits_0\limits^1 2 B_{\sqrt{r}}(u_{\sqrt{r}})(\theta)dr
\end{equation*}
as an upper bound for our integral and by a simple substitution this is equal to  
\begin{equation*}
4\int\limits_0\limits^1 B_s(u_s)(\theta)ds.
\end{equation*}
This integral is bounded by $c u(0)$ for at least one $\theta$ as $\nu$ is a probability measure. 

\end{proof}

By similar computation
\begin{equation*}
I=\int\limits_S \int\limits_S\int\limits_0\limits^1 P_r(|\nabla u_r(\zeta)|)p_r(\zeta,\theta)dr d\zeta d\nu(\theta),
\end{equation*} 
where $P_r$ is the operator with integral kernel $p_r$, is also bounded.

\section{Discussion and Related Open Problems} \label{last section}

In the following discussion we exploit the remarkable flexibility of the integral
\begin{equation*}
\int\limits_{B^d} |\nabla u(w)|p(w,\theta)dA(w)
\end{equation*}
in \thref{integral}, in particular its conformal invariance and its connection to Brownian martingales. 

\paragraph*{First Application of the Coarea Formula}

By applying the coarea formula to the integral in \thref{integral} we get that 
\begin{equation*}
\int \limits_{0}\limits^{\infty}\int\limits_{u^{-1}(c)}p(w,\theta)dH_{n-1}(w)dc <c u(0)
\end{equation*}
where $H_{n-1}$ is the $n-1$ dimensional Hausdorff measure and $u^{-1}(c)$ is the preimage of the value $c$. 
For a harmonic function $v$ with $|v|<A$ we get
\begin{equation*}
\int \limits_{-A}\limits^{A}\int\limits_{v^{-1}(c)}p(w,\theta)dH_{n-1}(w)dc 
\end{equation*} 
and as there is a $\theta$ such that the expression is bounded, we know that $\int\limits_{v^{-1}(c)}p(w,\theta)dH_{n-1}$ is in $\mathcal{L}^1([-A,A])$. Therefore we can ask the following question:

\begin{open}
For which $p>1$ exists a $\theta$ such that the integrand $\int\limits_{v^{-1}(c)}p(w,\theta)dH_{n-1}$ is in $\mathcal{L}^p([-A,A])$?
\end{open}

\paragraph*{Simply Connected Domains:}

In order to get from an arbitrary simply connected domain to the unit disc we use the Riemann mapping theorem.

We make use of two concepts regarding the boundary of simply connected domains. The first one is the Martin boundary of a domain (cf \cite{Bass} or \cite{Helms}) and the second one is the concept of prime ends introduced by Carathéodory (cf \cite{Pom} and \cite{CoLo}).
For simply connected domains, prime ends and the Martin boundary coincide up to homeomorphisms. 

We can now take advantage of the conformal invariance and transfer our result of \thref{integral} to this setting.

\begin{theorem}
\thlabel{simply connected}
Let $\Omega$ be a simply connected bounded domain, $v$ a positive harmonic function on $\Omega$, $k^\Omega$ the Martin kernel and $g^\Omega(\cdot,w_0)$ the Green's function with singularity in $w_0\in\Omega$. Then there is a prime end $\zeta$ such that 
\begin{equation*}
\int \limits_{\Omega} |\nabla v(w)|k^{\Omega}(w,\zeta)|\nabla g^{\Omega}(w,w_0)|dA(w)<c
\end{equation*} 
where $c$ is a constant only depending on the value $v(w_0)$ and the domain $\Omega$.
\end{theorem}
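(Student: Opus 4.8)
The plan is to reduce everything to the unit disc via the Riemann mapping theorem and then read off the claim from \thref{integral} after an exact change of variables. First I would fix a conformal map $\gamma:\Omega\to\mathbb{D}$ with $\gamma(w_0)=0$ (Riemann mapping theorem), together with its extension $\hat{\gamma}$ to the prime end compactification. Setting $u:=v\circ\gamma^{-1}$ produces a positive harmonic function on $\mathbb{D}$ with $u(0)=v(w_0)$, so \thref{integral} in dimension $d=2$ yields a boundary point $\theta\in S$ with
\[
\int_{\mathbb{D}}|\nabla u(z)|\,p(z,\theta)\,dA(z)<c\,u(0)=c\,v(w_0).
\]
I would then let $\zeta:=\hat{\gamma}^{-1}(\theta)$ be the associated prime end; since prime ends and the Martin boundary coincide, this is exactly the point at which the Martin kernel is to be evaluated.

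Next I would convert the target integral into a disc integral using the standard conformal dictionary. Invariance of the Green's function gives $g^{\Omega}(w,w_0)=g^{\mathbb{D}}(\gamma(w),0)=-\log|\gamma(w)|$, whence $|\nabla g^{\Omega}(w,w_0)|=|\gamma'(w)|/|\gamma(w)|$. The same invariance transfers to the Martin kernel under $\hat{\gamma}$, so $k^{\Omega}(w,\zeta)=k^{\mathbb{D}}(\gamma(w),\theta)=2\pi\,p(\gamma(w),\theta)$. Combining this with the conformal transformation rules $|\nabla v(w)|=|\nabla u(\gamma(w))|\,|\gamma'(w)|$ and $dA(z)=|\gamma'(w)|^2\,dA(w)$, all factors of $|\gamma'|$ cancel and one arrives at the exact identity
\[
\int_{\Omega}|\nabla v(w)|\,k^{\Omega}(w,\zeta)\,|\nabla g^{\Omega}(w,w_0)|\,dA(w)=2\pi\int_{\mathbb{D}}|\nabla u(z)|\,p(z,\theta)\,\frac{1}{|z|}\,dA(z).
\]

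It remains to bound the right-hand side, where the only new feature compared with \thref{integral} is the weight $1/|z|$ coming from the pole of the Green's function at $w_0$. I would split the disc at $|z|=\tfrac12$. On $\{|z|\ge\tfrac12\}$ one has $1/|z|\le 2$, so this part is at most $4\pi$ times the integral in \thref{integral}, hence a constant multiple of $v(w_0)$. On $\{|z|<\tfrac12\}$ the factor $|\nabla u(z)|\,p(z,\theta)$ is bounded by a constant multiple of $u(0)$ — Harnack's inequality bounds $u$, the interior gradient estimate bounds $|\nabla u|$, and $p(z,\theta)\le 2/\pi$ there — while in two dimensions the weight is integrable, $\int_{|z|<1/2}|z|^{-1}\,dA(z)=\pi$. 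Summing the two contributions gives the asserted bound with a constant depending only on $v(w_0)$ (and, through $\Omega$, on the existence of the Green's function and the Riemann map).

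The conceptual heart of the argument, and the step most in need of care, is this exact cancellation in the change of variables: the gradient of the Green's function supplies precisely the factor $|\gamma'|/|\gamma|$ which, after combining with the Jacobian and the gradient transformation, collapses everything to the disc integral weighted by $1/|z|$. The potential danger is the singularity of this weight at the pole $w_0$; the argument succeeds because we are in dimension two, where $1/|z|$ is locally integrable and the Martin boundary is realized concretely by prime ends through $\hat{\gamma}$. In higher dimensions both the local integrability of the weight and this clean conformal dictionary would fail, which is why the transfer is genuinely special to simply connected planar domains.
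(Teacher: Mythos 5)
Your argument is correct and follows the same skeleton as the paper's proof: Riemann map $\gamma$ with $\gamma(w_0)=0$, application of \thref{integral} to $u=v\circ\gamma^{-1}$, the identities $k^{\Omega}(w,\zeta)=k^{\mathbb{D}}(\gamma(w),\theta)$ and $|\nabla g^{\Omega}(w,w_0)|=|\gamma'(w)|/|\gamma(w)|$, and a split at the level set $|\gamma|=\tfrac12$ to tame the singularity of $|\nabla g^{\Omega}|$ at the pole. The one genuine divergence is in how that last step is executed. The paper stays on $\Omega$: it keeps the extra factor $|\gamma(w)|$ in the integrand, and on the region $\{|\gamma|<\tfrac12\}$ it bounds $|\nabla v|$ and $k^{\Omega}$ by constants and invokes Evans' theorem that $\int_{\Omega}|\nabla g^{\Omega}(w,w_0)|\,dA(w)$ is finite for bounded simply connected plane domains. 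You instead push everything to the disc, where the offending weight becomes exactly $1/|z|$, and replace the appeal to Evans by the elementary computation $\int_{\{|z|<1/2\}}|z|^{-1}\,dA(z)=\pi$ together with Harnack and the interior gradient estimate for $u$ near $0$. This buys two things: it removes one external citation, and it yields the sharper conclusion that the integral is bounded by $c\,v(w_0)$ with $c$ an absolute constant (linear in $v(w_0)$ and independent of $\Omega$), whereas the theorem as stated only asserts a constant depending on $v(w_0)$ and $\Omega$. Your closing observation about why the argument is confined to planar simply connected domains is consistent with the open problems the paper raises afterwards.
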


\begin{proof}
Let $\gamma:\Omega\rightarrow\mathbb{D}$ be a Riemann map with $\gamma(w_0)=0$. The function $u:=v\circ f$, where $f$ is the inverse of $\gamma$, is a positive harmonic function on $\mathbb{D}$. So we know by \thref{integral} that there is a $\theta\in\mathbb{T}$ with
\begin{equation*}
\int \limits_{\mathbb{D}} |\nabla(v\circ f)(z)|p(z,\theta)dA(z)<c v(w_0).
\end{equation*}
By substitution we get that 
\begin{equation*}
\int \limits_{\Omega} |\nabla v(w)|p(\gamma(w),\theta)||\gamma'(w)|dA(w)<c v(w_0).
\end{equation*}
Next we use that $p(\gamma(w),\theta)$ is the Martin kernel $k^{\Omega}(w,\zeta)$ where $\zeta$  is the prime end of $\Omega$ such that the extension $\hat{\gamma}$ of $\gamma$ to the prime ends of $\Omega$ satisfies $\hat{\gamma}(\zeta)=\theta$ and get 

\begin{equation*}
\int \limits_{\Omega} |\nabla v(w)|k^{\Omega}(w,\zeta)|\gamma'(w)|dA(w)<c v(w_0).
\end{equation*}

By calculation we know $|\gamma'(w)|=|\nabla g^{\Omega}(w,w_0)||\gamma(w)|$ where $g^{\Omega}$ is the Green's function of $\Omega$. Therefore we obtain 
\begin{equation*}
\int \limits_{\Omega} |\nabla v(w)|k^{\Omega}(w,\zeta)|\nabla g^{\Omega}(w,w_0)||\gamma(w)|dA(w)<c v(w_0).
\end{equation*}
We now use the fact that for bounded simply connected plane domains the integral of the gradient of the Green's function 
\begin{equation*}
\int\limits_\Omega|\nabla g^\Omega(w,z)|dA(z)
\end{equation*}
is bounded by a constant only depending on $\Omega$ (see \cite{Evans}). 
Partitioning the domain into one part where $|\gamma| <\frac{1}{2}$ and one where $|\gamma|\geq \frac{1}{2}$ leads to the following. On the domain where $|\gamma| <\frac{1}{2}$ we know that $k^\Omega$ and $|\nabla v|$ are bounded by constants only depending on the domain $\Omega$ and the value $v(w_0)$. On the second part we only use that $|\gamma|\geq \frac{1}{2}$. Then we obtain
\begin{equation*}
\int \limits_{\Omega} |\nabla v(w)|k^{\Omega}(w,\zeta)|\nabla g^{\Omega}(w,w_0)|dA(w)<c
\end{equation*} 
where the constant $c$ only depends on the value $v(w_0)$ and $\Omega$. 
\end{proof}

The area integral in \thref{simply connected} depends expressly on the Green's function and the Martin kernels of the simply connected domain $\Omega\subset\mathbb{C}$ and not on the Riemann map itself. This allows us to consider it in more general domains, either in multiply connected domains or in higher dimensions. 

\begin{open}
Let $v$ be a positive harmonic function on a not necessarily simply connected domain $\Omega\subset\mathbb{C}$ with Green's function $g^\Omega$.
Is there an element $\zeta$ of the Martin boundary of $\Omega$ such that the integral
\begin{equation*}
\int\limits_{\Omega}|\nabla v(w)|k^\Omega(w,\zeta)|\nabla g^{\Omega}(w,w_0)|dA(w)
\end{equation*}
is bounded? Even for Denjoy domains, that is when $\partial\Omega\subset\mathbb{R}$, this problem seems to be open.
\end{open}

We refer to Jerison and Kenig (\cite{JerKe}) for the concept of nontangentially accessible (NTA) domains and can ask the following.
\begin{open}
Let $v$ be a positive harmonic function on a nontangentially accessible domain $\Omega\subset\mathbb{R}^d$.
Is there an element $\zeta$ of the Martin boundary of $\Omega$ such that the integral
\begin{equation*}
\int\limits_{\Omega}|\nabla v(w)|k^\Omega(w,\zeta)|\nabla g^{\Omega}(w,w_0)|dA(w)
\end{equation*}
is bounded? The problem is even open for Lipschitz domains.
\end{open}

\paragraph*{Second Application of the Coarea Formula}
We continue by transforming the integral 
\begin{equation*}
\int \limits_{\Omega} |\nabla v(w)|k^{\Omega}(w,\zeta)|\nabla g^{\Omega}(w,w_0)||\gamma(w)|dA(w)
\end{equation*}
Using the coarea formula we get that the integral 
\begin{equation*}
\int \limits_{0}\limits^{\infty}\int\limits_{g^{-1}(c)} |\nabla v(w)|k^{\Omega}(w,\zeta)(w)|\gamma (w)|d\sigma_c(w)dc
\end{equation*}
is bounded. In fact, we apply the coarea formula to a partition of $\Omega$  into subdomains where the Green's function is Lipschitz and put the results together. As we know that $|\gamma (w)|=e^{-g(w,w_0)}$, the expression
\begin{equation*}
\int \limits_{0}\limits^{\infty}\int\limits_{g^{-1}(c)} |\nabla v(w)|k^{\Omega}(w,\zeta)(w)d\sigma_c(w)e^{-c}dc,
\end{equation*}
where $\sigma_c$ is the Hausdorff-measure, is bounded. 
As this implies that 
\begin{equation*}
\int\limits_{g^{-1}(c)} |\nabla v(w)|k^{\Omega}(w,\zeta)(w)d\sigma_c(w) 
\end{equation*}
is in $\mathcal{L}^1([0,\infty),e^{-c} dc)$, we can ask the following question.

\begin{open}
Is there a $\zeta$ such that the function $c\mapsto\int\limits_{g^{-1}(c)} |\nabla v(w)|k^{\Omega}(w,\zeta)(w)d\sigma_c(w)$ is in $\mathcal{L}^p([0,\infty),e^{-c} dc)$ for $p>1$?
\end{open}

\paragraph*{Brownian martingales:}

We also want to present the link to stochastic analysis. Therefore, let $Z_t$ be the $d$-dimensional Brownian motion stopped at the unit sphere. We apply the formula
\begin{equation*}
\mathbb{E}\left(\int \limits_0\limits^\tau m(Z_t) dt\right)=\int\limits_{B^d}m(z)g(z,0)dA(z)
\end{equation*}
for bounded continuous functions $m$, the Green's function $g$ and Brownian motion $Z_t$.
Now the integral in \thref{integral} can be written as

\begin{align*}
\int \limits_{B^d}|\nabla u(w)|p(w, \theta)dA(w)& =\mathbb{E}\left(\int\limits_{0}\limits^{\tau}|\nabla u(Z_t)|p(Z_t,\theta)\frac{1}{g(0,Z_t)}dt\right).
\end{align*}
As $u(Z_t)$ can be written as $\int\limits_0\limits^t \nabla u (Z_s)dZ_s$ for $t<\tau$, a new question arises naturally:
\begin{open}
For $F:\Omega \rightarrow \mathbb{R}^+$ with $F=\int\limits_{0}\limits^{\tau}G_t dZ_t+F_0$, where $G_t$ is a martingale, show that there is a $\theta$ such that $\mathbb{E}\left(\int\limits_{0}\limits^{\tau}|G_t|\frac{p(Z_t,\theta)}{g(0,Z_t)}dt\right)<c$.
\end{open}

Using the local time of Brownian motion $L_\tau^w$ and the fact $\mathbb{E}(L_\tau^w)=g(0,w)$ we could also rewrite the integral of \thref{integral} 
\begin{equation*}
\int\limits_{B^d}|\nabla u (w)| p(w,\theta)\frac{1}{g(0,w)}\mathbb{E}(L_{\tau}^w) dA(w)
=\mathbb{E}\left(\int\limits_{B^d}|\nabla u (w)| p(w,\theta)\frac{1}{g(0,w)}L_{\tau}^w dA(w)\right).
\end{equation*}
These expressions are all bounded for at least one $\theta$ because of their equality to the integral in \thref{integral}.

\subsection*{Acknowledgements}
This paper is part of the second named author’s PhD thesis written at the Department of Analysis, Johannes Kepler University Linz. The  research has  been  supported  by  the  Austrian  Science  foundation  (FWF) Pr.Nr P28352-N32.

\textsc{P.F.X. M\"uller, Institute of Analysis, Johannes Kepler University Linz, Altenberger
Strasse 69, A-4040 Linz, Austria}

E-mail address: Paul.Mueller@jku.at

\bigskip

\textsc{K. Riegler, Institute of Analysis, Johannes Kepler University Linz, Altenberger
Strasse 69, A-4040 Linz, Austria}

E-mail address: Katharina.Riegler@jku.at
\end{document}